\documentclass[12pt]{amsart}

\usepackage{comment}
\usepackage{appendix}
\usepackage{graphicx}
\usepackage{amsfonts}
\usepackage{amsmath}
\allowdisplaybreaks
\usepackage{amsthm}
\usepackage{xcolor}
\usepackage{mathtools}
\usepackage{fancyhdr,amssymb}
\usepackage{url}
\usepackage{enumerate}
\usepackage{booktabs}

\newcommand{\Z}{\mathbb{Z}}

\def\<{\langle}
\def\>{\rangle}

\newcommand{\C}{\mathbb{C}}
\newcommand{\supp}{\mathrm{supp}}
\newcommand{\Conj}{\mathrm{Conj}}

\newtheorem{theorem}{Theorem}[section]
\newtheorem{proposition}[theorem]{Proposition}

\newtheorem{definition}[theorem]{Definition}
\newtheorem{notation}[theorem]{Notation}
\newtheorem{remark}[theorem]{Remark}

\newtheorem{conjecture}[theorem]{Conjecture}

\newtheorem{lemma}[theorem]{Lemma}
\newtheorem{question}[theorem]{Question}

\begin{document}

\pagestyle{fancy}
\fancyhf{}
\lhead{}
\rhead{}
\cfoot{}
\renewcommand{\headrulewidth}{0pt}
\renewcommand{\footrulewidth}{0pt}


\title[A counterexample to the Gowers uncertainty conjecture]{A counterexample to the Gowers $U^k$ uncertainty conjecture for $k\ge 6$}

\author{Iana Vranesko}\email{iana\_vranesko@brown.edu}\address{Department of Mathematics, Brown University, Providence, RI, 02912}

\begin{abstract}
Let $f:\Z_N^d\to\C$ be a nonzero signal with support $E=\supp(f)$ and Fourier support $\Sigma=\supp(\hat f)$. The Donoho--Stark uncertainty principle asserts $N^d\le|E|\,|\Sigma|$, and the additive energy uncertainty principle of Aldahleh--Iosevich--Iosevich--Jaimangal--Mayeli--Pack refines it to $1\le|E|\cdot\|1_\Sigma\|_{U^2}^{4/3}$. It has been conjectured that the natural higher-order analogue
\[
1\le |E|\cdot\|1_\Sigma\|_{U^k}^{2^k/(k+1)}
\]
holds for every $k\ge 2$, the coset case giving equality. We show that this fails for every $k\ge 6$. The counterexample is the two-point signal $f=\delta_0-\delta_1$ on $\Z_3$, for which the left-hand quantity equals $\tfrac23\bigl(2(k+1)\bigr)^{1/(k+1)}$; this is $<1$ precisely when $k\ge 6$, and tends to $2/3$. More generally, for every prime $p\ge 3$ the analogous two-point signal on $\Z_p$ violates the conjecture for all sufficiently large $k$, the quantity tending to $2/p$. This happens because the $U^k$ norm of an indicator is, in the $k$ limit, governed not by the size of $\Sigma$ but by the size of the largest coset contained in $\Sigma$, so that in the limit the conjecture degenerates into the false assertion that $|E|$ times the largest coset inside $\Sigma$ is at least $N^d$. The conjectured inequality remains a theorem for $k=2$. The cases $k=3,4,5$, where the exponent $2^k/(k+1)$ is not yet large enough for these families to cause problems, are left open, together with the problem of determining the sharp exponent.
\end{abstract}

\maketitle

\section{Introduction}

Let $N\ge2$ and let $f:\Z_N^d\to\C$, with the discrete Fourier transform normalized by
\[
\hat f(m)\coloneq N^{-d/2}\sum_{x\in\Z_N^d}f(x)\chi(-m\cdot x),
\qquad
\chi(t)\coloneq e^{2\pi i t/N}.
\]
We write $E=\supp(f)$ and $\Sigma=\supp(\hat f)$. The classical uncertainty principle of Donoho and Stark \cite{DonohoStark} states that
\begin{equation}\label{eq:DS}
N^d\le |E|\cdot|\Sigma|
\end{equation}
for every nonzero $f$, with equality exactly when $f$ is a modulated multiple of the indicator function of a coset of a subgroup of $\Z_N^d$ (see \cite{MatolcsiSzucs}).

Aldahleh--Iosevich--Iosevich--Jaimangal--Mayeli--Pack \cite{2025additiveenergyuncertaintyprinciple} refined \eqref{eq:DS} by replacing $|\Sigma|$ with the additive energy
$\Lambda_2(\Sigma)=\#\{(x_1,x_2,x_3,x_4)\in\Sigma^4: x_1+x_2=x_3+x_4\}$, obtaining
\begin{equation}\label{eq:additive}
N^d\le |E|\,\Lambda_2(\Sigma)^{1/3}.
\end{equation}

We now recall the Gowers uniformity norms, in the normalization of \cite{2025additiveenergyuncertaintyprinciple} (see also \cite[Chapter 11]{Tao-Vu-2006}).

\begin{definition}[Gowers $U^k$ norm]\label{def:gowers}
Let $k\ge 2$ and $f:\Z_N^d\to\C$. Then $\|f\|_{U^k}$ is the unique nonnegative real $2^k$-th root of
\[
\|f\|_{U^k}^{2^k}
=
\frac{1}{N^{d(k+1)}}
\sum_{x\in\Z_N^d}\ \sum_{\substack{h_1,\dots,h_k\in\Z_N^d\\ h=(h_1,\dots,h_k)}}\ \prod_{w\in\{0,1\}^k}\Conj^{|w|}f(x+w\cdot h),
\]
where $\Conj^\ell(z)$ denotes $\ell$-fold conjugation and $w\cdot h=\sum_{i=1}^k w_ih_i$.
\end{definition}

Since $\Lambda_2(S)=N^{3d}\|1_S\|_{U^2}^4$, the inequality \eqref{eq:additive} may be rewritten as
\begin{equation}\label{eq:U2}
1\le |E|\cdot\|1_\Sigma\|_{U^2}^{4/3},
\end{equation}
which is strictly stronger than \eqref{eq:DS} unless $\Sigma$ is a coset of a subgroup.

The shape of \eqref{eq:U2} suggests the following extrapolation, which appears as a conjecture in the ``future work'' section of \cite{RefinedUncertainty}.

\begin{conjecture}[{\cite[Conjecture 5.2]{RefinedUncertainty}}]\label{conj:main}
Let $f:\Z_N^d\to\C$ be a nonzero signal with support $E$ and Fourier support $\Sigma$. Then for every $k\ge2$,
\begin{equation}\label{eq:conj}
1\le |E|\cdot\|1_\Sigma\|_{U^k}^{2^k/(k+1)}.
\end{equation}
\end{conjecture}

The evidence for Conjecture \ref{conj:main} is that it is a theorem for $k=2$, by \eqref{eq:additive}, and that it holds with equality for every $k$ in the extremal case of \eqref{eq:DS}: if $H\le\Z_N^d$ and $f=1_{y+H}$, then $\Sigma=H^\perp$, and a direct computation from Definition \ref{def:gowers} gives $\|1_{H^\perp}\|_{U^k}^{2^k/(k+1)}=|H^\perp|/N^d$, so that
\[
|\supp(1_{y+H})|\cdot\|1_{H^\perp}\|_{U^k}^{2^k/(k+1)}=|H|\cdot\frac{|H^\perp|}{N^d}=1.
\]
Moreover, the exponent $2^k/(k+1)$ is the largest one for which the coset examples do not already fail; see Remark \ref{rem:exponent}.

The purpose of this note is to show that Conjecture \ref{conj:main} is nevertheless false, and to identify where it breaks.

\begin{theorem}\label{thm:main}
Let $N=3$, $d=1$, and let $f=\delta_0-\delta_1:\Z_3\to\C$, so that $E=\{0,1\}$ and $\Sigma=\{1,2\}$. Then for every $k\ge 2$,
\begin{equation}\label{eq:mainvalue}
|E|\cdot\|1_\Sigma\|_{U^k}^{2^k/(k+1)}
=
\frac{2}{3}\bigl(2(k+1)\bigr)^{1/(k+1)} .
\end{equation}
This quantity is strictly less than $1$ if and only if $k\ge 6$, and it decreases to $2/3$ as $k\to\infty$. In particular Conjecture \ref{conj:main} fails for every $k\ge6$.
\end{theorem}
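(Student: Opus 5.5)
We compute $\|1_\Sigma\|_{U^k}^{2^k}$ exactly by counting the parallelepipeds that lie inside $\Sigma$, and then reduce both claims about \eqref{eq:mainvalue} to an elementary inequality in one real variable.

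First, the supports. Here $E=\{0,1\}$, and $\hat f(m)=3^{-1/2}\bigl(1-\chi(m)\bigr)$, where the sign of the exponent comes from $\chi(-m\cdot 1)$ and is immaterial. This vanishes exactly when $m=0$, so $\Sigma=\{1,2\}=\Z_3\setminus\{0\}$. Since $1_\Sigma$ is real and $\{0,1\}$-valued, the conjugations in Definition \ref{def:gowers} are irrelevant. Hence
\[
\|1_\Sigma\|_{U^k}^{2^k}=3^{-(k+1)}\,C_k,
\]
where $C_k$ is the number of tuples $(x,h_1,\dots,h_k)\in\Z_3^{k+1}$ such that $x+w\cdot h\neq0$ for every $w\in\{0,1\}^k$.

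The main step is to show $C_k=2(k+1)$. I will argue by cases on the nonzero entries of $h$.
\begin{itemize}
\item Suppose two indices $i\neq j$ have $h_i,h_j\neq0$. Setting $w$ to vanish outside $\{i,j\}$, the subset sums $\{0,h_i,h_j,h_i+h_j\}$ already cover $\Z_3$. This is clear if $h_j=h_i=a$ (giving $0,a,2a$) and if $h_j=-h_i$ (giving $0,a,-a$). So some vertex $x+w\cdot h$ equals $0$, and such tuples contribute nothing.
\item If $h=0$, every vertex equals $x$, so the condition is $x\neq0$. This gives $2$ tuples.
\item If exactly one $h_i=a\neq0$, the vertices are $\{x,x+a\}$, so we need $x\notin\{0,-a\}$, which forces $x=a$. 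This gives $k\cdot 2\cdot 1=2k$ tuples.
\end{itemize}
Thus $C_k=2k+2$. Substituting,
\[
|E|\cdot\|1_\Sigma\|_{U^k}^{2^k/(k+1)}=2\cdot\frac{\bigl(2(k+1)\bigr)^{1/(k+1)}}{3},
\]
which is \eqref{eq:mainvalue}.

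For the remaining claims, put $t=k+1\ge3$ and $\varphi(t)=\log(2t)/t$. Its derivative is $\varphi'(t)=(1-\log 2t)/t^2$, which is negative for $t>e/2$. So $\varphi$ is strictly decreasing on the relevant range, and $\varphi(t)\to0$. Hence $(2t)^{1/t}$ decreases strictly to $1$, and the quantity in \eqref{eq:mainvalue} decreases to $2/3$.

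The value is $<1$ if and only if $2t<(3/2)^t$. By monotonicity this holds exactly for $t\ge t_0$, where $t_0$ is the first integer at which it holds. At $t=6$ we have $12>(3/2)^6=729/64\approx11.39$, so it fails. At $t=7$ we have $14<(3/2)^7=2187/128\approx17.09$, so it holds. Therefore the value is $<1$ precisely when $k\ge6$, and Conjecture \ref{conj:main} fails for these $k$.

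The only step needing care is the case analysis for $C_k$, specifically the observation that two nonzero $h_i$ force $0$ into the parallelepiped. The rest is direct computation.
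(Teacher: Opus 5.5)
Your proposal is correct and follows the paper's proof: you count cubes inside $\Sigma$ by splitting on the number of nonzero $h_i$ (zero, one, or at least two), obtain $T_k(\Sigma)=2(k+1)$, and locate the threshold by checking $k+1=6,7$. The one difference is that you obtain monotonicity from the strict decrease of $\log(2t)/t$ rather than the paper's derivative argument for $\psi(n)=(3/2)^n-2n$, which also proves the strict decrease asserted in the statement; the paper's proof only establishes the limit $2/3$.
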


The example is relatively small: a two-point signal on a group of order three. The same signal is available at every $k$, and the point is only that the quantity \eqref{eq:mainvalue} decreases in $k$ and does not cross $1$ until $k=6$. At $k=3$ it equals $\tfrac23\cdot 8^{1/4}=1.1212\ldots$, which is far above $1$. This is why the failure is hard to spot at the first few orders of the Gowers hierarchy, which are the ones the conjecture was extrapolated from. This also explains some of the difficulty the author and the coauthors of \cite{RefinedUncertainty} had in trying to prove or disprove the conjecture: those efforts were largely focused on $k=3.$

Theorem \ref{thm:main} can be re-states for other $\mathbb{Z}_p$, where $p$ is prime.

\begin{theorem}\label{thm:allp}
Let $p\ge3$ be prime and let $f=\delta_0-\delta_1:\Z_p\to\C$, so that $E=\{0,1\}$ and $\Sigma=\Z_p\setminus\{0\}$. Then
\[
\lim_{k\to\infty}\ |E|\cdot\|1_\Sigma\|_{U^k}^{2^k/(k+1)}=\frac2p .
\]
In particular, for every prime $p\ge3$ Conjecture \ref{conj:main} fails for all sufficiently large $k$.
\end{theorem}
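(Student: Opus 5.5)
The plan is to compute $|E|$ and $\Sigma$, reduce the Gowers quantity to a count of ``cubes'' avoiding $0$, and show that this count grows only polynomially in $k$, so that its $(k+1)$-th root tends to $1$.

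\textbf{Supports and reduction.} Clearly $E=\{0,1\}$, so $|E|=2$. Also $\hat f(m)=p^{-1/2}(1-\chi(m))$, where the sign convention on $\chi$ is irrelevant since only the zero set matters. This vanishes exactly when $m=0$, so $\Sigma=\Z_p\setminus\{0\}$. Since $1_\Sigma$ is real, Definition \ref{def:gowers} gives
\[
\|1_\Sigma\|_{U^k}^{2^k}=\frac{C_k}{p^{k+1}},\qquad
C_k\coloneq\#\bigl\{(x,h)\in\Z_p\times\Z_p^k:\ x+w\cdot h\neq 0\ \text{for all } w\in\{0,1\}^k\bigr\}.
\]
Therefore
\[
|E|\cdot\|1_\Sigma\|_{U^k}^{2^k/(k+1)}=\frac{2}{p}\,C_k^{1/(k+1)},
\]
and it suffices to show $C_k^{1/(k+1)}\to1$.

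\textbf{Lower bound.} Taking $h=0$ and any $x\neq0$ shows $C_k\ge p-1\ge 1$. Hence $\liminf_k C_k^{1/(k+1)}\ge 1$.

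\textbf{Upper bound.} This is the key step. Fix $h$ and let $S(h)=\{w\cdot h: w\in\{0,1\}^k\}$ be the set of subset sums, so that $S(h)=\{0,h_1\}+\cdots+\{0,h_k\}$. Suppose $j$ of the $h_i$ are nonzero. Applying the Cauchy--Davenport inequality $|A+B|\ge\min(p,|A|+|B|-1)$ repeatedly, in $\Z_p$ with $p$ prime, gives $|S(h)|\ge\min(p,j+1)$.

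If $j\ge p-1$, then $S(h)=\Z_p$. In that case $x+S(h)$ contains $0$ for every $x$, so such $(x,h)$ contributes nothing to $C_k$. Consequently
\[
C_k\le p\sum_{j=0}^{p-2}\binom{k}{j}(p-1)^j\le c_p\,k^{p-2}
\]
for a constant $c_p$ depending only on $p$. This is polynomial in $k$, so $\limsup_k C_k^{1/(k+1)}\le 1$.

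\textbf{Conclusion.} Combining the two bounds, $C_k^{1/(k+1)}\to1$, and so the quantity in Theorem \ref{thm:allp} tends to $2/p$. Since $p\ge3$, we have $2/p\le 2/3<1$. Hence the quantity is $<1$ for all sufficiently large $k$, and Conjecture \ref{conj:main} fails for all such $k$.

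The only nontrivial ingredient is the subset-sum bound via Cauchy--Davenport, which is the main obstacle. It forces all but at most $p-2$ of the $h_i$ to vanish, and that is exactly what makes $C_k$ subexponential. As a sanity check, for $p=3$ this bound allows at most one nonzero $h_i$. The resulting count is $2+k\cdot2\cdot1=2(k+1)$, using that $\{x,x+h_i\}$ avoids $0$ for exactly one nonzero $x$ once $h_i\neq0$. This matches Theorem \ref{thm:main}.
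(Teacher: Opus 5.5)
Your proposal is correct and follows essentially the same route as the paper: you reduce to the cube count $T_k(\Sigma)$, bound it below by the $h=0$ terms, and bound it above polynomially in $k$ using Cauchy--Davenport on the subset sums $S(h)$, so that $T_k(\Sigma)^{1/(k+1)}\to 1$. The only cosmetic difference is that you bound each surviving $h$'s contribution crudely by $p$, where the paper uses the sharper $p-|S(h)|\le p-1-j$; this makes no difference in the limit.
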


Section \ref{sec:mechanism} explains the mechanism behind both theorems. A coset $y+H\subseteq\Sigma$ contributes $|H|^{k+1}$ combinatorial $k$-cubes, and when these are essentially all the cubes, the quantity in \eqref{eq:conj} tends to $|E|\cdot c(\Sigma)/N^d$, where $c(\Sigma)$ is the size of the largest coset contained in $\Sigma$. In the limit $k\to\infty$, Conjecture \ref{conj:main} thus degenerates into $N^d\le|E|\cdot c(\Sigma)$,
which is \eqref{eq:DS} with $|\Sigma|$ replaced by the largest coset inside $\Sigma$. This fails whenever $\Sigma$ has a small number of cosets, most sharply for $\Sigma=\Z_p\setminus\{0\}$, where $c(\Sigma)=1$.

Finally, Section \ref{sec:survives} outlines what parts of the main conjecture are left unresolved. Conjecture \ref{conj:main} is a theorem for $k=2$, the cases $k=3,4,5$ remain open, and $k=3$ appears to be a special case because of an exact Fourier self-duality of the $U^3$ norm (Remark \ref{rem:selfdual}). We propose the restricted Conjecture \ref{conj:restricted} and the problem of determining the sharp exponent (Question \ref{q:exponent}).

\section{The counterexample on $\Z_3$}\label{sec:Z3}

Throughout this section and the next it is convenient to phrase the Gowers norm of an indicator function as a cube count.

\begin{notation}\label{not:cubes}
For $A\subseteq\Z_N^d$ and $k\ge1$ let
\[
T_k(A)\coloneq\#\Bigl\{(x,h_1,\dots,h_k)\in(\Z_N^d)^{k+1}\ :\ x+w\cdot h\in A\ \text{ for all } w\in\{0,1\}^k\Bigr\}.
\]
\end{notation}

Since $1_A$ is real and nonnegative, every factor in Definition \ref{def:gowers} is $0$ or $1$, and the product equals $1$ exactly when all $2^k$ points $x+w\cdot h$ lie in $A$. Hence
\begin{equation}\label{eq:cubecount}
\|1_A\|_{U^k}^{2^k}=\frac{T_k(A)}{N^{d(k+1)}},
\qquad\text{so}\qquad
\|1_A\|_{U^k}^{2^k/(k+1)}=\frac{T_k(A)^{1/(k+1)}}{N^{d}} .
\end{equation}
Consequently Conjecture \ref{conj:main} is equivalent to the purely combinatorial statement
\begin{equation}\label{eq:conjcubes}
T_k(\Sigma)\ \ge\ \left(\frac{N^d}{|E|}\right)^{k+1}.
\end{equation}
For $h=(h_1,\dots,h_k)\in(\Z_N^d)^k$ write
\[
S(h)\coloneq\Bigl\{\textstyle\sum_{i\in I}h_i\ :\ I\subseteq\{1,\dots,k\}\Bigr\}
=\{0,h_1\}+\{0,h_2\}+\cdots+\{0,h_k\}
\]
for its set of subset sums.

\begin{lemma}\label{lem:subsetsums}
Let $A\subseteq\Z_N^d$ and $k\ge1$. Then
\[
T_k(A)=\sum_{h\in(\Z_N^d)^k}\#\bigl\{x\in\Z_N^d\ :\ x+S(h)\subseteq A\bigr\}.
\]
In particular, if $A=\Z_N^d\setminus\{0\}$ then
\[
T_k(A)=\sum_{h\in(\Z_N^d)^k}\bigl(N^d-|S(h)|\bigr).
\]
\end{lemma}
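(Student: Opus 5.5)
The plan is to fix $h$ first and then count $x$, starting from Notation \ref{not:cubes}. Rewrite $T_k(A)$ as a double sum:
\[
T_k(A)=\sum_{h\in(\Z_N^d)^k}\#\bigl\{x\in\Z_N^d : x+w\cdot h\in A\ \text{for all } w\in\{0,1\}^k\bigr\}.
\]
For fixed $h$, the map $w\mapsto w\cdot h=\sum_{i:w_i=1}h_i$ sends $\{0,1\}^k$ onto $S(h)$. Indeed, every subset $I\subseteq\{1,\dots,k\}$ corresponds to its indicator vector $w$. Hence the set of points $\{x+w\cdot h : w\in\{0,1\}^k\}$ is exactly $x+S(h)$.

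The condition ``$x+w\cdot h\in A$ for all $w$'' is therefore equivalent to $x+S(h)\subseteq A$. This is the only point requiring care: the vectors $w\cdot h$ may coincide for different $w$, but the membership condition only sees the underlying set, so repetitions do not matter. This gives the first formula.

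For the special case $A=\Z_N^d\setminus\{0\}$, note that $x+S(h)\subseteq A$ fails exactly when $0\in x+S(h)$, i.e.\ when $x\in -S(h)$. Since negation is a bijection, the number of excluded $x$ is $|{-S(h)}|=|S(h)|$. The count of good $x$ is therefore $N^d-|S(h)|$, and summing over $h$ gives the second formula.

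The argument has no real obstacle. The one subtlety worth stating explicitly is the multiplicity issue, namely that $|S(h)|$ may be less than $2^k$.
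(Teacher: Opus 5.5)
Your proposal is correct and follows essentially the same route as the paper: identify $\{x+w\cdot h : w\in\{0,1\}^k\}$ with $x+S(h)$ (repetitions being harmless), sum over $h$, and in the special case count the $|S(h)|$ excluded values $x\in -S(h)$. Your explicit remark that negation is a bijection, so $|{-S(h)}|=|S(h)|$, is the same step the paper states as ``$-x\notin S(h)$''.
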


\begin{proof}
As $w$ ranges over $\{0,1\}^k$, the quantity $w\cdot h$ ranges over $S(h)$ (with multiplicity), so the condition ``$x+w\cdot h\in A$ for all $w$'' is exactly ``$x+S(h)\subseteq A$''. Summing over $h$ gives the first identity. If $A=\Z_N^d\setminus\{0\}$, the condition becomes $-x\notin S(h)$, which has exactly $N^d-|S(h)|$ solutions $x$.
\end{proof}

\begin{lemma}\label{lem:Z3count}
Let $\Sigma=\{1,2\}=\Z_3\setminus\{0\}$. Then $T_k(\Sigma)=2(k+1)$ for every $k\ge1$.
\end{lemma}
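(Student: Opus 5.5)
We apply the second identity of Lemma \ref{lem:subsetsums} with $N=3$, $d=1$:
\[
T_k(\Sigma)=\sum_{h\in\Z_3^k}\bigl(3-|S(h)|\bigr).
\]
The task is therefore to determine the distribution of $|S(h)|$ as $h$ ranges over $\Z_3^k$. The key observation is that $|S(h)|$ depends only on which nonzero values occur among the $h_i$. Zero coordinates contribute nothing, since $\{0,0\}=\{0\}$. Every nonzero element of $\Z_3$ generates the group. So only a few cases arise.

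We split into three cases according to the nonzero entries of $h$.
\begin{enumerate}[(i)]
\item If $h=0$, then $S(h)=\{0\}$, which contributes $3-1=2$.
\item If exactly one $h_i$ is nonzero, then $S(h)=\{0,h_i\}$ has size $2$ and contributes $1$.
\item If at least two $h_i$ are nonzero, say $h_i,h_j$, we claim $S(h)=\Z_3$, which contributes $0$.
\end{enumerate}
For the claim in case (iii): if $h_i=h_j=a$, then $\{0,a\}+\{0,a\}=\{0,a,2a\}=\Z_3$. If $h_i\neq h_j$, then $\{h_i,h_j\}=\{1,2\}$, and $\{0,1\}+\{0,2\}=\{0,1,2,3\}=\Z_3$. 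In both subcases the sumset of these two factors is already all of $\Z_3$, and adding the remaining factors $\{0,h_l\}$, each of which contains $0$, cannot shrink it.

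Counting the cases, case (ii) occurs for $2k$ vectors $h$: there are $k$ choices of position and $2$ choices of nonzero value. Hence
\[
T_k(\Sigma)=2\cdot 1+1\cdot 2k+0=2(k+1).
\]
As a sanity check, for $k=1$ the triples $(x,h)$ with $x,x+h\in\{1,2\}$ number $4=2\cdot 2$, which agrees with the formula.

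There is no real obstacle here. The only point requiring care is the claim in case (iii), namely that two nonzero steps already fill $\Z_3$, and this is exactly what makes the count collapse to a linear function of $k$.
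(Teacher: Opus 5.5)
Your proof is correct and follows the paper's argument exactly: you apply Lemma \ref{lem:subsetsums}, split by the number of nonzero $h_i$ (none, one, at least two), and check that two nonzero steps already give $S(h)=\Z_3$. One small correction: your motivating remark that $|S(h)|$ depends only on which nonzero values occur is false as stated, since $h=(1,0)$ and $h=(1,1)$ use the same value but have $|S(h)|=2$ and $3$; your case analysis counts nonzero entries rather than values, so it never uses that remark.
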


\begin{proof}
We apply Lemma \ref{lem:subsetsums} and split according to the number $j$ of indices $i$ with $h_i\ne0$.

If $j=0$, that is $h=0$, then $S(h)=\{0\}$ and $|S(h)|=1$, contributing $3-1=2$.
If $j=1$, let $h_i\ne0$ and $h_{i'}=0$ for $i'\ne i$, then $S(h)=\{0,h_i\}$ and $|S(h)|=2$, contributing $3-2=1$ for each such $h$. There are $k$ choices of $i$ and $2$ choices of $h_i\in\{1,2\}$, hence $2k$ tuples, contributing $2k$ in total.
If $j\ge2$, choose $i\ne i'$ with $h_i,h_{i'}\ne0$. Then
\[
S(h)\supseteq\{0,h_i\}+\{0,h_{i'}\}=\{0,\ h_i,\ h_{i'},\ h_i+h_{i'}\}.
\]
If $h_i=h_{i'}$ this set is $\{0,h_i,2h_i\}=\Z_3$; if $h_i\ne h_{i'}$ then $\{h_i,h_{i'}\}=\{1,2\}$ and the set is again $\{0,1,2\}=\Z_3$. In either case $|S(h)|=3$ and the contribution is $0$.

Summing everything together, we get $T_k(\Sigma)=2+2k=2(k+1)$.
\end{proof}

\begin{proof}[Proof of Theorem \ref{thm:main}]
With $f=\delta_0-\delta_1$ we have $E=\{0,1\}$, and
\[
\hat f(m)=3^{-1/2}\bigl(1-\chi(-m)\bigr),\qquad \chi(t)=e^{2\pi i t/3},
\]
which vanishes precisely when $\chi(-m)=1$, i.e.\ when $m=0$. Hence $\Sigma=\{1,2\}$ and $|E|=2$.

By \eqref{eq:cubecount} and Lemma \ref{lem:Z3count},
\[
|E|\cdot\|1_\Sigma\|_{U^k}^{2^k/(k+1)}
=2\cdot\frac{T_k(\Sigma)^{1/(k+1)}}{3}
=\frac23\bigl(2(k+1)\bigr)^{1/(k+1)},
\]
which is \eqref{eq:mainvalue}. Writing $n=k+1$, this is $<1$ if and only if $(2n)^{1/n}<3/2$, i.e.\ if and only if
\[
\psi(n)\coloneq\Bigl(\tfrac32\Bigr)^{n}-2n>0 .
\]
We have $\psi(6)=\bigl(\tfrac32\bigr)^{6}-12=11.390625-12<0$ and $\psi(7)=17.0859\ldots-14>0$. Moreover $\psi'(n)=\bigl(\tfrac32\bigr)^n\log\tfrac32-2>0$ as soon as $\bigl(\tfrac32\bigr)^n>2/\log\tfrac32=4.93\ldots$, which holds for $n\ge4$. Hence $\psi$ is strictly increasing on $[4,\infty)$ and $\psi(n)>0$ for $n\ge7$, that is, for $k\ge6$. Finally $\bigl(2(k+1)\bigr)^{1/(k+1)}\to1$, so the quantity tends to $2/3$.
\end{proof}

The values are as follows.

\begin{center}
\begin{tabular}{lcccccccc}
\toprule
$k$ & $2$ & $3$ & $4$ & $5$ & $6$ & $7$ & $8$ & $10$\\
\midrule
$T_k(\Sigma)$ & $6$ & $8$ & $10$ & $12$ & $14$ & $16$ & $18$ & $22$\\
$|E|\cdot\|1_\Sigma\|_{U^k}^{2^k/(k+1)}$
 & $1.2114$ & $1.1212$ & $1.0566$ & $1.0087$ & $\mathbf{0.9719}$ & $\mathbf{0.9428}$ & $\mathbf{0.9191}$ & $\mathbf{0.8830}$\\
\bottomrule
\end{tabular}
\end{center}

\begin{remark}
The $k=2$ entry $\tfrac23\cdot6^{1/3}=1.2114\ldots$ is consistent with \eqref{eq:U2}: here $\Lambda_2(\Sigma)=6$, $|E|=2$, and $2\cdot 6^{1/3}=3.63\ldots\ge 3=N$. The failure at $k=6$ is therefore caused by the growth of the exponent $2^k/(k+1)$ relative to the growth of $T_k(\Sigma)$.
\end{remark}

\section{Counterexamples on $\Z_p$}\label{sec:mechanism}

\begin{proof}[Proof of Theorem \ref{thm:allp}]
As above, $f=\delta_0-\delta_1$ has $\hat f(m)=p^{-1/2}(1-\chi(-m))$, so $E=\{0,1\}$ and $\Sigma=\Z_p\setminus\{0\}$.

\emph{Upper bound for $T_k(\Sigma)$.} Fix $h\in\Z_p^k$ and let $j=\#\{i:h_i\ne0\}$. Discarding the zero coordinates and applying the Cauchy--Davenport inequality $|A+B|\ge\min(p,|A|+|B|-1)$ repeatedly to the $j$ two-element sets $\{0,h_i\}$ gives
\[
|S(h)|\ \ge\ \min\bigl(p,\ j+1\bigr).
\]
By Lemma \ref{lem:subsetsums} the contribution of $h$ to $T_k(\Sigma)$ is $p-|S(h)|\le\max(0,p-1-j)$, which vanishes for $j\ge p-1$. Grouping by $j$,
\[
T_k(\Sigma)\ \le\ \sum_{j=0}^{p-2}\binom kj (p-1)^j\,(p-1-j).
\]
For $0\le j\le p-2$ and $k\ge1$ we have $\binom kj\le k^{j}\le k^{p-2}$ and $(p-1)^j(p-1-j)\le p^{p-1}$, so
\begin{equation}\label{eq:polybound}
T_k(\Sigma)\ \le\ p^{p}\,k^{p-2}
\qquad (k\ge1).
\end{equation}
Thus $T_k(\Sigma)$ grows at most polynomially in $k$, with degree depending only on $p$.

\emph{Lower bound.} Taking $h=0$ and $x\in\Sigma$ gives $T_k(\Sigma)\ge p-1\ge2$. Hence by \eqref{eq:cubecount},
\[
|E|\cdot\|1_\Sigma\|_{U^k}^{2^k/(k+1)}=\frac{2}{p}\,T_k(\Sigma)^{1/(k+1)},
\]
and $2\le T_k(\Sigma)\le p^pk^{p-2}$ forces $T_k(\Sigma)^{1/(k+1)}\to1$ as $k\to\infty$ with $p$ fixed. Hence the quantity tends to $2/p<1$, and in particular is $<1$ for all large $k$.
\end{proof}

The thresholds computed from Lemma \ref{lem:subsetsums} are given below; the entries are $|E|\cdot\|1_\Sigma\|_{U^k}^{2^k/(k+1)}$ for $\Sigma=\Z_p\setminus\{0\}$ and $|E|=2$.

\begin{center}
\begin{tabular}{lcccccc c}
\toprule
 & $k=3$ & $k=5$ & $k=6$ & $k=8$ & $k=10$ & $k=15$ & least $k$ with value $<1$\\
\midrule
$p=3$  & $1.1212$ & $1.0087$ & $0.9719$ & $0.9191$ & $0.8830$ & $0.8279$ & $6$\\
$p=5$  & $1.3454$ & $1.1130$ & $1.0310$ & $0.9122$ & $0.8313$ & $0.7112$ & $7$\\
$p=7$  & $1.4922$ & $1.2101$ & $1.1013$ & $0.9403$ & $0.8304$ & $0.6695$ & $8$\\
$p=11$ & $1.6583$ & $1.3523$ & $1.2130$ & $0.9999$ & $0.8538$ & $0.6428$ & $8$\\
$p=13$ & $1.7073$ & $1.4077$ & $1.2585$ & $1.0268$ & $0.8678$ & $0.6398$ & $9$\\
\bottomrule
\end{tabular}
\end{center}

First, the least failing $k$ increases with $p$: larger $p$ gives a smaller limit $2/p$ but a larger polynomial degree $p-2$ in \eqref{eq:polybound}, hence slower convergence. Second, for each fixed $k$ the minimum over $p$ is attained at a $p$ that grows with $k$ (at $p=3$ for $k\le7$, at $p=5$ for $k=8,9$, at $p=7$ for $k=10,11$, at $p=11$ for $k=12,13$, at $p=13$ for $k=14,15$), and the minimal value continues to decrease. The failure is therefore not a bounded defect that could be absorbed into a constant.

We now isolate the general mechanism. Recall $c(A)=\max\{|H|:H\le\Z_N^d,\ y+H\subseteq A\text{ for some }y\}$, the size of the largest coset contained in $A$; note $c(A)\ge1$ for $A\ne\emptyset$.

\begin{proposition}\label{prop:mechanism}
Let $A\subseteq\Z_N^d$ be nonempty. Then $T_k(A)\ge c(A)^{k+1}$ for every $k\ge1$, and consequently
\[
\liminf_{k\to\infty}\ |E|\cdot\|1_A\|_{U^k}^{2^k/(k+1)}\ \ge\ \frac{|E|\cdot c(A)}{N^d}.
\]
If in addition $T_k(A)\le P(k)\,c(A)^{k+1}$ for some polynomial $P$, then the limit exists and equals $|E|\,c(A)/N^d$.
\end{proposition}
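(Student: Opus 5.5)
The plan is to bound $T_k(A)$ from below by counting only the cubes that lie inside a single maximal coset. Fix a subgroup $H\le\Z_N^d$ and a point $y$ with $y+H\subseteq A$ and $|H|=c(A)$; such a pair exists because $A\neq\emptyset$ (take $H=\{0\}$ if nothing larger fits). Every tuple with $x\in y+H$ and $h_1,\dots,h_k\in H$ satisfies $x+w\cdot h\in y+H\subseteq A$ for all $w\in\{0,1\}^k$, because $H$ is closed under addition. In the language of Lemma \ref{lem:subsetsums}, $S(h)\subseteq H$, so $x+S(h)\subseteq y+H$. These tuples are distinct points of $(\Z_N^d)^{k+1}$, and there are $|H|\cdot|H|^k=c(A)^{k+1}$ of them. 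Hence $T_k(A)\ge c(A)^{k+1}$.

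For the liminf, I will use \eqref{eq:cubecount}:
\[
|E|\cdot\|1_A\|_{U^k}^{2^k/(k+1)}=\frac{|E|\,T_k(A)^{1/(k+1)}}{N^d}\ge\frac{|E|\,c(A)}{N^d}.
\]
This already holds for every $k\ge1$, so it certainly holds for the liminf.

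For the final claim, assume $T_k(A)\le P(k)\,c(A)^{k+1}$. Since $T_k(A)\ge c(A)^{k+1}>0$, we get $P(k)\ge1$ wherever the hypothesis is used. Then $T_k(A)^{1/(k+1)}$ is squeezed between $c(A)$ and $P(k)^{1/(k+1)}c(A)$. The only step needing care is showing $P(k)^{1/(k+1)}\to1$. For a polynomial $P$ of degree $m$ we have $1\le P(k)\le Ck^m$ for large $k$, so
\[
1\le P(k)^{1/(k+1)}\le \exp\!\Bigl(\tfrac{\log C+m\log k}{k+1}\Bigr)\to1 .
\]
Here $P(k)\ge1$ is what keeps the logarithm nonnegative. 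By the squeeze, the limit exists and equals $|E|\,c(A)/N^d$.

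There is no real obstacle. The only subtlety is confirming that the counted tuples are genuinely distinct and lie in $A$, which is immediate from the subgroup structure.
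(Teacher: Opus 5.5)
Your proof is correct and is essentially the paper's argument: you count the $c(A)^{k+1}$ tuples in $(y+H)\times H^k$ for a maximal coset $y+H\subseteq A$, take $(k+1)$-st roots via the cube-count identity, and squeeze using $P(k)^{1/(k+1)}\to1$. You also make explicit two points the paper leaves implicit: the lower bound holds for every $k$, not just in the liminf, and $P(k)\ge1$ keeps the logarithm nonnegative.
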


\begin{proof}
Let $y+H\subseteq A$ with $|H|=c(A)$. For every $x\in y+H$ and every $h_1,\dots,h_k\in H$ we have $x+w\cdot h\in y+H\subseteq A$ for all $w\in\{0,1\}^k$, so these $|H|^{k+1}$ tuples are counted by $T_k(A)$. The two displayed consequences follow from \eqref{eq:cubecount} together with $P(k)^{1/(k+1)}\to1$.
\end{proof}

\begin{remark}[The limiting form of the conjecture]\label{rem:limitform}
Proposition \ref{prop:mechanism} shows that, for supports $\Sigma$ whose cube count is polynomially close to the trivial coset contribution, Conjecture \ref{conj:main} in the limit $k\to\infty$ asserts
\[
N^d\ \le\ |E|\cdot c(\Sigma),
\]
that is, the Donoho--Stark inequality \eqref{eq:DS} with $|\Sigma|$ replaced by the largest coset contained in $\Sigma$. Since $c(\Sigma)\le|\Sigma|$, with equality precisely when $\Sigma$ is a coset, this is a strictly stronger statement than \eqref{eq:DS} for every non-coset $\Sigma$, and it is false in general: the family of Theorem \ref{thm:allp} has $c(\Sigma)=1$ (the only subgroups of $\Z_p$ are trivial, and $\Sigma\ne\Z_p$), $|E|=2$, and $N^d=p$. Conjecture \ref{conj:main} is thus asymptotically equivalent to a coset-restricted uncertainty principle, which has no signal-theoretic justification.
\end{remark}

\begin{remark}[The exponent]\label{rem:exponent}
Suppose one seeks the largest exponent $e_k>0$ for which
\begin{equation}\label{eq:generalexp}
1\le |E|\cdot\|1_\Sigma\|_{U^k}^{e_k}
\end{equation}
holds for all nonzero $f:\Z_N^d\to\C$ and all $N,d$. Taking $f=1_{y+H}$ so that $\Sigma=H^\perp$ and $|E|=N^d/|H^\perp|$, one gets from \eqref{eq:cubecount} that \eqref{eq:generalexp} reads
$\bigl(N^d/|H^\perp|\bigr)^{1-e_k(k+1)/2^k}\ge1$, which forces
\[
e_k\ \le\ \frac{2^k}{k+1}.
\]
Thus, the exponent in Conjecture \ref{conj:main} is the largest one the coset examples permit. However, Theorem \ref{thm:main} shows it is nevertheless too large. Applying \eqref{eq:generalexp} instead to the family of Theorem \ref{thm:allp} and using \eqref{eq:polybound} gives, for each fixed prime $p\ge3$,
\[
e_k\ \le\ \frac{2^k\log 2}{(k+1)\log p-\log T_k(\Z_p\setminus\{0\})}\ =\ \bigl(1+o_p(1)\bigr)\,\frac{2^k\log_p 2}{k+1}
\qquad (k\to\infty).
\]
The constraint from a fixed $p$ is vacuous for $k$ small relative to $p$, and the table above indicates that the binding prime grows with $k$, which suggests a sharp exponent of order $2^k/(k\log k)$. Determining it is Question \ref{q:exponent} below.
\end{remark}

\section{What remains}\label{sec:survives}

For $k=2$, Conjecture \ref{conj:main} is precisely the additive energy uncertainty principle \eqref{eq:additive}, hence a theorem \cite{2025additiveenergyuncertaintyprinciple}; Theorems \ref{thm:main} and \ref{thm:allp} dispose of $k\ge6$. The cases $k=3,4,5$ remain open, and the families used here do not decide them: by \eqref{eq:mainvalue} the signal $f=\delta_0-\delta_1$ on $\Z_3$ gives the values $1.1212\ldots$, $1.0566\ldots$, $1.0087\ldots$ at $k=3,4,5$, and by the table of Section \ref{sec:mechanism} the corresponding signal on $\Z_p$ gives strictly larger values for every prime $p\ge5$.

Let us record the following reformulation, which removes the signal entirely. By \eqref{eq:cubecount} the quantity in \eqref{eq:conj} depends on $f$ only through $|E|$ and $\Sigma$, and $T_k$ is monotone under inclusion of sets. Hence, when setting
\[
m(\Sigma)\coloneq\min\bigl\{|\supp(g)|\ :\ g\ne0,\ \supp(\hat g)\subseteq\Sigma\bigr\},
\]
the minimum weight of the space of functions with Fourier support in $\Sigma$, Conjecture \ref{conj:main} is equivalent to \eqref{eq:conjcubes} with $|E|$ replaced by $m(\Sigma)$, namely
\begin{equation}\label{eq:reform}
T_k(\Sigma)\ \ge\ \left(\frac{N^d}{m(\Sigma)}\right)^{k+1}
\qquad\text{for every nonempty }\Sigma\subseteq\Z_N^d .
\end{equation}
Proposition \ref{prop:mechanism} gives the lower bound $T_k(\Sigma)\ge c(\Sigma)^{k+1}$, and the Donoho--Stark inequality \eqref{eq:DS} gives $N^d/m(\Sigma)\le|\Sigma|$. Thus \eqref{eq:reform} already follows from the coset contribution alone whenever $c(\Sigma)\ge N^d/m(\Sigma)$, and in particular whenever $\Sigma$ is a coset, where both quantities equal $|\Sigma|$. 

\begin{conjecture}\label{conj:restricted}
Conjecture \ref{conj:main} holds for $2\le k\le 5$, with equality if and only if $\Sigma$ is a coset of a subgroup of $\Z_N^d$.
\end{conjecture}

\begin{question}\label{q:exponent}
Determine the largest sequence $(e_k)_{k\ge2}$ for which $1\le|E|\cdot\|1_\Sigma\|_{U^k}^{e_k}$ holds for all nonzero $f:\Z_N^d\to\C$ and all $N,d$. By Remark \ref{rem:exponent}, $e_k\le 2^k/(k+1)$, with equality for $k=2$ and, conjecturally, for $3\le k\le5$, and $e_k=O\bigl(2^k/(k\log k)\bigr)$ is suggested by the family of Theorem \ref{thm:allp}.
\end{question}

The case $k=3$ is special among the remaining ones for a structural reason, which we write down in case it is useful.

\begin{remark}[Self-duality at $k=3$]\label{rem:selfdual}
Let $H_k=\{(x+w\cdot h)_{w\in\{0,1\}^k}\ :\ x,h_1,\dots,h_k\in\Z_N^d\}\le(\Z_N^d)^{2^k}$,
the order-$N^{d(k+1)}$ subgroup used to define $\|f\|_{U^k}^{2^k}$. For each vertex $w\in\{0,1\}^k$, Fourier inversion in the normalization above gives
\[
\Conj^{|w|} g(x+w\cdot h)
= N^{-d/2}\sum_{\xi_w\in\Z_N^d}\Conj^{|w|}\hat g(\xi_w)\,
\chi\!\bigl((-1)^{|w|}\xi_w\cdot(x+w\cdot h)\bigr).
\]
The sign $(-1)^{|w|}$ arises because an odd number of conjugations replaces $\chi$ by
$\overline\chi$. Write $\epsilon_w=(-1)^{|w|}$. By multiplying over the $2^k$ vertices, we get that
\[
\prod_{w\in\{0,1\}^k}\Conj^{|w|}g(x+w\cdot h)
= N^{-d2^{k-1}}\!\!\sum_{(\xi_w)_w\in(\Z_N^d)^{2^k}}
\Bigl(\prod_w\Conj^{|w|}\hat g(\xi_w)\Bigr)\,
\chi\!\Bigl(\sum_w\epsilon_w\,\xi_w\cdot(x+w\cdot h)\Bigr).
\]
Since $w\cdot h=\sum_i w_ih_i$, the phase is linear in the $k+1$ free parameters
$x,h_1,\dots,h_k$:
\[
\sum_w\epsilon_w\,\xi_w\cdot(x+w\cdot h)
=\Bigl(\sum_w\epsilon_w\xi_w\Bigr)\cdot x
+\sum_{i=1}^k\Bigl(\sum_{w:\,w_i=1}\epsilon_w\xi_w\Bigr)\cdot h_i .
\]
Averaging over $x,h_1,\dots,h_k\in\Z_N^d$ and using
$N^{-d}\sum_{y\in\Z_N^d}\chi(a\cdot y)=\mathbf 1_{\{a=0\}}$ once for each parameter absorbs the
factor $N^{-d(k+1)}$ of Definition \ref{def:gowers} and forces the $k+1$ linear conditions
\[
\sum_{w}\epsilon_w\xi_w=0,
\qquad
\sum_{w:\,w_i=1}\epsilon_w\xi_w=0\quad(1\le i\le k),
\]
which define the subgroup $L_k\le(\Z_N^d)^{2^k}$. Setting $\eta_w=\epsilon_w\xi_w$, these
conditions say that $\eta$ lies in the annihilator $H_k^{\perp}$ of $H_k$ under the
pairing $\langle\eta,y\rangle=\sum_w\eta_w\cdot y_w$. Since $(\xi_w)_w\mapsto(\epsilon_w\xi_w)_w$
is an automorphism of $(\Z_N^d)^{2^k}$, it carries $L_k$ onto $H_k^{\perp}$, so
\[
|L_k|=|H_k^{\perp}|=\frac{N^{d2^k}}{|H_k|}=N^{d(2^k-(k+1))}.
\]
Hence for every $g:\Z_N^d\to\C$,
\[
\|g\|_{U^k}^{2^k}=N^{-d2^{k-1}}\sum_{\xi\in L_k}\ \prod_{w\in\{0,1\}^k}\Conj^{|w|}\hat g(\xi_w).
\]
Since $|H_k|=N^{d(k+1)}$ and $|L_k|=N^{d(2^k-k-1)}$, these subgroups of $(\Z_N^d)^{2^k}$ can
coincide only if $k+1=2^k-(k+1)$, i.e.\ $2^k=2(k+1)$, which holds among positive integers
only at $k=3$.

At $k=2$, $L_2$ collapses to the diagonal $\{\xi_{00}=\xi_{01}=\xi_{10}=\xi_{11}\}$, and
applying the identity to $g=\hat f$ (using $\widehat{\hat f}(\xi)=f(-\xi)$) gives
\[
\|\hat f\|_{U^2}^4=N^{-2d}\sum_{x\in\Z_N^d}|f(x)|^4
\qquad\text{for all } f:\Z_N^d\to\C.
\]
This relates the $U^2$ norm of $\hat f$ to the $\ell^4$ norm of $f$. The ratio $\|\hat f\|_{U^2}^4/\|f\|_{U^2}^4$ is not
constant in $f$.

At $k=3$, however, $H_3=L_3$, and $H_3$ is symmetric under negation (it is a
subgroup), so reparametrizing $\xi\mapsto-\xi$ in the sum above removes the reflection coming
from $\widehat{\hat f}(\xi)=f(-\xi)$ entirely. This gives the identity
\[
\|\hat f\|_{U^3}^8=\|f\|_{U^3}^8
\qquad\text{for all } f:\Z_N^d\to\C,
\]
with \emph{no} $N$-dependent constant. Thus Conjecture \ref{conj:main} at $k=3$ is invariant under $f\mapsto\hat f$. No such identity holds for $k\ne3$.
\end{remark}

\section*{Acknowledgements}
The author thanks the coauthors of \cite{RefinedUncertainty} for formulating the conjecture.

\bibliographystyle{alpha}
\bibliography{bibtex}

@article{DonohoStark,
  author  = {Donoho, David L. and Stark, Philip B.},
  title   = {Uncertainty Principles and Signal Recovery},
  journal = {SIAM Journal on Applied Mathematics},
  volume  = {49},
  number  = {3},
  pages   = {906--931},
  year    = {1989},
  doi     = {10.1137/0149053}
}

@article{MatolcsiSzucs,
  author  = {Matolcsi, Tam{\'a}s and Sz{\H u}cs, J{\'o}zsef},
  title   = {Intersection des mesures spectrales conjugu\'ees},
  journal = {Comptes Rendus de l'Acad\'emie des Sciences Paris S\'erie A-B},
  volume  = {277},
  pages   = {A841--A843},
  year    = {1973}
}

@misc{2025additiveenergyuncertaintyprinciple,
  author        = {Aldahleh, Karam and Iosevich, Alex and Iosevich, J. and
                    Jaimangal, J. and Mayeli, Azita and Pack, S.},
  title         = {Additive Energy, Uncertainty Principle and Signal Recovery Mechanisms},
  year          = {2025},
  eprint        = {2504.14702},
  archivePrefix = {arXiv},
  primaryClass  = {math.CA}
}

@misc{RefinedUncertainty,
  author        = {Bortnovskyi, Ivan and Duvivier, June and Iosevich, Alex and
                    Iosevich, Josh and Kwon, Say-Yeon and Laurence, Meiling and
                    Lucas, Michael and Pan, Tiancheng and Palsson, Eyvindur and
                    Smucker, Jennifer and Vranesko, Iana},
  title         = {Refined Additive Uncertainty Principle},
  year          = {2025},
  note          = {Accepted for publication, Pacific Journal of Mathematics},
  eprint        = {2510.26664},
  archivePrefix = {arXiv},
  primaryClass  = {math.CA}
}

@book{Tao-Vu-2006,
  author    = {Tao, Terence and Vu, Van},
  title     = {Additive Combinatorics},
  series    = {Cambridge Studies in Advanced Mathematics},
  volume    = {105},
  publisher = {Cambridge University Press},
  address   = {Cambridge},
  year      = {2006}
}

\end{document}